\newcommand{\NN}{\mathbbm{N}}
\newcommand{\RR}{\mathbbm{R}}
\newcommand{\ZZ}{\mathbbm{Z}}
\newtheorem{theorem}{Theorem}
\newtheorem{lemma}[theorem]{Lemma}
\newtheorem{definition}[theorem]{Definition}
\newtheorem{corollary}[theorem]{Corollary}
\DeclareMathOperator{\conv}{conv}
\begin{document}
\title{Convexity of distinct sum sets}
\author{Alexander Lemmens}
\date{\today}
\maketitle
\footnotetext[1]{The author was supported by the Flemish Research Council (FWO) when beginning work on this article.}

\begin{abstract}
We study a combinatorial notion where given a set of lattice points one takes the set of all sums of subsets of a fixed size, and we ask if the given set comes from a convex lattice polytope whether the resulting set also comes from a convex lattice polytope. We obtain a positive result in dimension 2 and a negative result in higher dimensions. We apply this to the corner cut polyhedron.\\

\noindent \emph{MSC2010:} Primary 52B20, 52C05, Secondary 05E18
\end{abstract}
\section{Introduction}
\begin{definition}
Let $S$ be a subset of $\ZZ^n$ and $p\in\ZZ$, we define the $p$-th distinct sum set of $S$ as follows:
$$\mathcal{D}_pS:=\{\sum_{m\in T}m|T\subseteq S,\hspace{5 pt}\#T=p\}.$$
When $p=0$ we define $\mathcal{D}_pS$ as $\{0\}$ where $0$ denotes the origin in $\ZZ^n$. If $p<0$ or $p>\#S$ we define $\mathcal{D}_pS$ to be the empty set.
\end{definition}
This is nonempty whenever $0\leq p\leq\# S$. Neither the terminology `distinct sum set', nor the notation are standard in this context, but as far as the author knows this notion doesn't have standard notation or terminology. I call it `distinct sum set' because we are adding together $p$ distinct points of $S$ and taking the set of sums obtained in this way.
In \cite{cuttingcorners,corteel,muller} the authors are concerned with the case where $S$ is $\mathbb{Z}_{\geq 0}^n$, for some $n$, and they call the convex hull of the resulting distinct sum set a corner cut polyhedron. In \cite{wagner} they call the convex hull of $\mathcal{D}_kS$ the $k$-set polytope of $S$ and they denote it $\mathcal{P}_k(S)$. In this article we are mainly concerned with the case where $S$ is the set of lattice points of some convex bounded set in $\RR^n$. We investigate whether the set $\mathcal{D}_pS$ is also the set of lattice points of a convex set. It turns out however that there is a simple counterexample: the set $S:=\{(0,0),(1,0),(0,1),(-1,-1)\}$.
\begin{center}
\begin{tikzpicture}
\draw[<->] (1,0) -- (0,0) -- (0,1);
\draw (0,0) -- (-.8,0);
\draw (0,0) -- (0,-.8);
\draw[thick] (0,.5) -- (.5,0) -- (-.5,-.5) -- (0,.5);
\fill (0,0) circle (1.5 pt);
\fill (0,.5) circle (1.5 pt);
\fill (-.5,-.5) circle (1.5 pt);
\fill (.5,0) circle (1.5 pt);
\end{tikzpicture}
\end{center}
We get $$\mathcal{D}_2S=\{(-1,-1),(-1,0),(0,-1),(1,0),(0,1),(1,1)\},$$
which is not the set of lattice points of anything convex, as $(0,0)$ is missing.
\begin{center}
\begin{tikzpicture}
\draw[<->] (1,0) -- (0,0) -- (0,1);
\draw (0,0) -- (-.8,0);
\draw (0,0) -- (0,-.8);
\fill (.5,.5) circle (1.5 pt);
\fill (0,-.5) circle (1.5 pt);
\fill (-.5,0) circle (1.5 pt);
\fill (0,.5) circle (1.5 pt);
\fill (-.5,-.5) circle (1.5 pt);
\fill (.5,0) circle (1.5 pt);
\end{tikzpicture}
\end{center}

%The reason for this terminology comes from considering vector spaces $V$ with a basis of the form $v_P$, $P\in S$, so each basis element corresponds to a point in $S$. Each basis element $v_{P_1}\wedge\ldots\wedge v_{P_p}$ of the exterior power $\bigwedge^p V$ can then be assigned a lattice point, namely $P_1+\ldots+P_p$, and the set of lattice points you get is $\mathcal{D}_pS$. This comes up naturally in the study of toric varieties, where line bundles $L$ have an action of the torus, and $\bigwedge^pH^0(L,X)$ also has such an action, which splits it as a direct sum of pieces indexed by the lattice points in $\mathcal{D}_pS$.\\

But it turns out the counterexamples are very limited.
Before we can state our theorem we give the following definitions.
\begin{definition}
By a \emph{lattice point} we mean a point in Euclidean space with integer coordinates. A \emph{lattice polytope} is the convex hull of any nonempty finite set of lattice points in Euclidean space. When working in the plane we speak of a \emph{lattice polygon}
\end{definition}
\begin{definition}
We call two lattice polytopes $P,Q$ \emph{equivalent} if there is an affine transformation $T:\RR^n\longrightarrow\RR^n$ with $T(\ZZ^n)=\ZZ^n$ such that $T(P)=Q$.
\end{definition}
By the notation `conv' we will mean the convex hull of a set of points.
\begin{theorem}\label{wedgeconvex}
Let $P\subset\RR^2$ be a convex lattice polygon, not equivalent to a polygon of the form $$\conv\{(-1,-1),(0,1),(a,0)\},$$
with $a\in\ZZ_{>0}$.
\begin{center}
\begin{tikzpicture}
\draw[<->] (1,0) -- (0,0) -- (0,1);
\draw (0,0) -- (-.8,0);
\draw (0,0) -- (0,-.8);
\draw[thick] (0,.5) -- (.5,0) -- (-.5,-.5) -- (0,.5);
\fill (0,0) circle (1.5 pt);
\fill (0,.5) circle (1.5 pt);
\fill (-.5,-.5) circle (1.5 pt);
\fill (.5,0) circle (1.5 pt);
\end{tikzpicture}
\quad
\begin{tikzpicture}
\draw[<->] (1.5,0) -- (0,0) -- (0,1);
\draw (0,0) -- (-.8,0);
\draw (0,0) -- (0,-.8);
\draw[thick] (0,.5) -- (1,0) -- (-.5,-.5) -- (0,.5);
\fill (0,0) circle (1.5 pt);
\fill (0,.5) circle (1.5 pt);
\fill (-.5,-.5) circle (1.5 pt);
\fill (.5,0) circle (1.5 pt);
\fill (1,0) circle (1.5 pt);
\end{tikzpicture}
\quad
\begin{tikzpicture}
\draw[<->] (2,0) -- (0,0) -- (0,1);
\draw (0,0) -- (-.8,0);
\draw (0,0) -- (0,-.8);
\draw[thick] (0,.5) -- (1.5,0) -- (-.5,-.5) -- (0,.5);
\fill (0,0) circle (1.5 pt);
\fill (0,.5) circle (1.5 pt);
\fill (-.5,-.5) circle (1.5 pt);
\fill (.5,0) circle (1.5 pt);
\fill (1,0) circle (1.5 pt);
\fill (1.5,0) circle (1.5 pt);
\end{tikzpicture}
\quad\quad\quad$\cdots$\newline
\small \textup{exceptions} \normalsize
\end{center}
Then for all $0\leq p\leq N$ the set $\mathcal{D}_p(P\cap\ZZ^2)$ is the set of lattice points of some convex lattice polytope. Here $N=\#(P\cap\ZZ^2)$.
\end{theorem}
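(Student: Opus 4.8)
The plan is to show that $A:=\bigwedge^p S$ (where $S=P\cap\ZZ^2$, $N=\#S$) is \emph{saturated}, i.e. $A=\conv(A)\cap\ZZ^2$, by a rounding argument. First I would record the clean description $\conv\bigwedge^p S=\{\sum_{v\in S}x_v v:0\le x_v\le 1,\ \sum_v x_v=p\}$, the image of the hypersimplex under $x\mapsto\sum_v x_v v$; its vertices are exactly the indicator vectors of $p$-subsets, whose images are the elements of $\bigwedge^p S$. I would also note the complementation symmetry: $x\mapsto\mathbbm{1}-x$ identifies the problem for $(p,q)$ with the one for $(N-p,\sigma-q)$, where $\sigma=\sum_{v\in S}v$, so $q\in\bigwedge^p S$ iff $\sigma-q\in\bigwedge^{N-p}S$. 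The cases $p\in\{0,1,N-1,N\}$ are immediate.

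The engine is the following. Fix a lattice point $q\in\conv\bigwedge^p S$ and consider the slice $F_q=\{x\in[0,1]^S:\sum_v x_v v=q,\ \sum_v x_v=p\}$, a nonempty polytope. Pick a vertex $x^{*}$; since $F_q$ is cut out of the cube by three affine equations, $x^{*}$ has at most three coordinates in the open interval $(0,1)$, the rest lying in $\{0,1\}$. The decisive fact, and the only place convexity of $P$ is used, is that if $a,b,c\in S$ then every lattice point of $\conv(a,b,c)\subseteq P$ lies in $S$. The fractional coordinates are affinely independent (else the columns $(v,1)$ would be dependent and could not support a vertex), so their weights lie in $(0,1)$ and sum to $1$ or $2$. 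In the weight-$1$ case the fractional part contributes a lattice point $m=\sum x^{*}_v v$ lying in the open triangle (or segment) on its support, hence $m\in S$; writing $I=\{v:x^{*}_v=1\}$ we get $q=\sum_{v\in I}v+m$, and if $m\notin I$ then $I\cup\{m\}$ is the desired $p$-subset. By complementation a weight-$2$ vertex for $(p,q)$ becomes a weight-$1$ vertex for $(N-p,\sigma-q)$, so it suffices to treat weight-$1$ vertices, where the only remaining difficulty is a \emph{collision} $m\in I$.

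A collision asks us to realize $q$ while using $m$ twice. Setting $I'=I\setminus\{m\}$ (so $|I'|=p-2$ and $\sum_{I'}v=q-2m$), it is resolved exactly by producing two distinct points $u,w\in S\setminus I'$ with $u+w=2m$, i.e. a nonzero lattice vector $e$ with $m\pm e\in S$ and $m\pm e\notin I'$. When the fractional support is a segment this is automatic, since $m$ then lies strictly between two lattice points of $S$ on a line and its two lattice neighbours on that line form such a pair; the genuinely hard case is a triangular support, where $m$ is an \emph{interior} lattice point of $P$. Thus the whole theorem reduces to: every interior lattice point $m$ of $P$ admits a lattice vector $e\ne 0$ with $m\pm e\in P$ (a \emph{symmetric pair}), together with the bookkeeping that enough such pairs exist to avoid $I'$. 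Since distinct symmetric pairs are disjoint, $p-2$ forbidden points block at most $p-2$ of them, so after reducing to $p\le N/2$ it is enough to exhibit sufficiently many symmetric pairs at each interior point, which I expect to follow from the non-thinness used below.

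The main obstacle is therefore a purely geometric classification, which I would isolate as a lemma: if an interior lattice point $m$ of a convex lattice polygon $P$ has no symmetric pair, then $P$ is equivalent to one of the listed triangles. Translating $m$ to the origin, having no symmetric pair says that the centrally symmetric polygon $P\cap(-P)$ contains no nonzero lattice point; by Minkowski's convex body theorem its area is then $<4$, which forces $P$ to be very thin about $m$, trapped between two nearby parallel lattice lines in some direction. From this strong thinness I would run a finite case analysis on the possible widths and on the few lattice points available on each side, checking that the only surviving shapes are the triangles with vertices (up to equivalence) $(0,1),(-1,-1),(k,0)$ and interior point $m=(0,0)$. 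Pinning this classification down exactly — matching the infinite family in the statement while excluding every other thin polygon — is where the real work lies; the rounding reduction above is comparatively soft.
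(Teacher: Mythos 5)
Your approach is genuinely different from the paper's (which inducts on $N$ by deleting vertices, decomposing $\conv\bigwedge^pP$ as $\bigcup_{v}\conv\bigwedge^pP_v$ via Lemmas \ref{union}--\ref{intersection}, and proving the required nonempty intersection of the $\conv\bigwedge^pP_v$ in Lemma \ref{good}), and its soft half is correct: the hypersimplex description of $\conv\bigwedge^pS$, the bound of at most three fractional coordinates at a vertex of the fiber $F_q$, the complementation reducing to fractional weight $1$, and the use of convexity of $P$ to place the fractional contribution $m$ in $S$ are all fine. The proof breaks in the collision case, and the break is not repairable by the classification lemma you propose.

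Concretely, your bookkeeping needs at least $p-1$ symmetric pairs at $m$, so that the $p-2$ points of $I'$ cannot block them all; this bound is false, and it cannot follow from Minkowski-type thinness because the obstruction is local to a corner of $P$. Take $P$ the triangle with vertices $(0,0),(k,0),(0,k)$ and $m=(1,1)$: any $e$ with $m\pm e\in P$ must have both coordinates in $\{-1,0,1\}$, so $m$ has at most four symmetric pairs for every $k$, while $p-2$ can be of order $k^2/4$. The bad vertex really occurs: for $k=6$, $N=28$, $p=14$, put $(2,1),(1,2),(2,2),(2,0)$ together with eight far-away points, say $(5,0),(6,0),(0,5),(0,6),(5,1),(1,5),(4,2),(2,4)$, into $I'$; set $x^*=1$ on $I'\cup\{(1,1)\}$ and give weights $\tfrac14,\tfrac14,\tfrac12$ to $(4,0),(0,4),(0,0)$. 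This is a vertex of its fiber (the fractional support is affinely independent), it has a collision at $m=(1,1)$, and each of the four pairs $\{(2,1),(0,1)\}$, $\{(1,2),(1,0)\}$, $\{(2,2),(0,0)\}$, $\{(2,0),(0,2)\}$ meets $I'$, so no local repair at $m$ exists. The point $q=\sum_{v\in I'}v+(2,2)$ does lie in $\bigwedge^{14}S$, but only via rearrangements that modify $I'$ itself, e.g. $q=\sum_{v\in I'\setminus\{(2,2)\}}v+(1,1)+(0,1)+(3,2)$. The same objection applies to the segment case you call automatic: the two lattice neighbours of $m$ on the segment may lie in $I'$. So the genuine combinatorial difficulty --- globally rearranging the chosen $p$-subset, which is exactly what the paper's induction machinery accomplishes --- is hidden inside your ``bookkeeping,'' and even a complete proof of your classification of polygons lacking symmetric pairs (your second, acknowledged gap) would not close it.
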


\begin{corollary}\label{cornercut}
	For the corner cut polyhedron $P_n^d:=\conv\mathcal{D}_d(\mathbb{N}^n)$ considered in \cite{cuttingcorners} with $n=2$ we have that every lattice point of $P_2^d$ can be written as a sum of $d$ distinct lattice points in $\mathbb{N}^2$.
\end{corollary}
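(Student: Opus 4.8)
The plan is to reduce the assertion about the infinite set $\NN^2$ to the bounded setting of Theorem~\ref{wedgeconvex} by a finiteness argument. Fix an arbitrary lattice point $x\in P_2^d=\conv\bigwedge^d(\NN^2)$; the goal is to show $x\in\bigwedge^d(\NN^2)$, i.e.\ that $x$ is a sum of $d$ distinct points of $\NN^2$.

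First I would localise the problem to finitely many summands. Since $x$ lies in the convex hull of the subset $\bigwedge^d(\NN^2)\subseteq\RR^2$, Carath\'eodory's theorem gives points $y_1,y_2,y_3\in\bigwedge^d(\NN^2)$ and weights $\lambda_i\geq 0$ with $\sum_i\lambda_i=1$ and $x=\sum_i\lambda_i y_i$. Each $y_i=\sum_{m\in T_i}m$ for some $T_i\subseteq\NN^2$ with $\#T_i=d$, so the finite set $U:=T_1\cup T_2\cup T_3\subseteq\NN^2$ already witnesses all three representations.

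Next I would select a bounded convex lattice polygon to which the theorem applies. Take $M$ large enough that the square $P:=[0,M]^2$ contains $U$; then $P\cap\ZZ^2\subseteq\NN^2$ while $U\subseteq P\cap\ZZ^2$. Consequently $y_1,y_2,y_3\in\bigwedge^d(U)\subseteq\bigwedge^d(P\cap\ZZ^2)$, whence $x\in\conv\bigwedge^d(P\cap\ZZ^2)$. The polygon $P$ is a genuine square of positive area, hence not a triangle and in particular not equivalent to any member of the excluded family of thin triangles in Theorem~\ref{wedgeconvex}; enlarging $M$ if necessary also ensures $d\leq\#(P\cap\ZZ^2)=N$, so the hypothesis $0\leq d\leq N$ holds.

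Now Theorem~\ref{wedgeconvex}, applied with $p=d$, states that $\bigwedge^d(P\cap\ZZ^2)$ is exactly the set of lattice points of the lattice polytope $\conv\bigwedge^d(P\cap\ZZ^2)$. Since $x$ is a lattice point of this convex hull, it follows that $x\in\bigwedge^d(P\cap\ZZ^2)$, so $x$ is a sum of $d$ distinct points of $P\cap\ZZ^2\subseteq\NN^2$, as required. As $x$ was an arbitrary lattice point of $P_2^d$, this proves the corollary. The argument is essentially a compactness reduction, so I do not anticipate a serious obstacle; the only point requiring care is the bookkeeping that $P$ simultaneously contains $U$, stays inside the first quadrant, and avoids the exceptional triangles, all of which the choice $P=[0,M]^2$ handles cleanly.
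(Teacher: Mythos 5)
Your proposal is correct and follows essentially the same route as the paper: both reduce to a bounded convex lattice polygon in the first quadrant containing witnesses for finitely many points of $\bigwedge^d(\NN^2)$ whose convex hull captures the given lattice point, verify the polygon avoids the exceptional triangles (the paper by requiring it to contain the unit square's four corners, you by noting a square has four vertices and so cannot be equivalent to a triangle), and then apply Theorem~\ref{wedgeconvex}. Your explicit use of Carath\'eodory's theorem is a minor stylistic difference from the paper's unspecified finite subset, not a different argument.
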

Of course since $\mathbb{R}_{\geq 0}^2$ is unbounded, and hence not conforming to our definition of polytope, we cannot apply the theorem directly, but it easily follows as we will show in section \ref{sec:cornercut}.\\

It turns out that in dimensions higher than two there is little hope of a positive result. In section \ref{3D} we give a counterexample where $P$ is a multiple of the standard simplex in 3D and $p=42$. This will also give a counterexample for $\mathbb{N}^3$ in that not every lattice point in $P_{3}^{42}$ can be written as a sum of 42 distinct lattice points in $\mathbb{N}^3$.\\

The motivation of this research came from studying graded Betti tables of toric surfaces, where each entry in the Betti table has a corresponding bidegree table (see \cite[p.\ 9]{computingbetti} and \cite[p.\ 3]{segre}), and one can ask questions about the convexity of the set of nonzero entries in these bidegree tables.\\

See https://github.com/AlexanderLemmens/DistinctSumSet for an algorithm that computes the distinct sum set.\\

In the next section we prove our convexity result for the two-dimensional case. In section 3 we deal with the corner cut polyhedron with $n=2$. In section 4 we provide a counterexample in 3D.
\subsection*{Acknowledgements}
The author was supported by the Flemish Research Council (FWO) when beginning work on this article. I also want to thank my mentor Wouter Castryck and my colleague Milena Hering for inspiring me to do this research.
\section{Proof of the theorem}
In this section we prove theorem \ref{wedgeconvex}. We first prove some lemmas. Henceforth we will write $\mathcal{D}_pP$ in stead of $\mathcal{D}_p(P\cap\ZZ^n)$. Given a finite set $S\subset\ZZ^n$ we say it is `convex' (with quotation marks) if it is the set of lattice points of some convex lattice polytope.
\begin{lemma}\label{wedgeminus}
Let $P$ be a convex lattice polytope and $N=\#(P\cap\ZZ^2)$. If $\mathcal{D}_pP$ is `convex' then $\mathcal{D}_{N-p}P$ is also `convex'. Here $0\leq p\leq N$.
\end{lemma}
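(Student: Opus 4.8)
The plan is to realize $\bigwedge^p P$ and $\bigwedge^{N-p}P$ as images of one another under a single lattice-preserving affine involution, after which the preservation of ``convexity'' is automatic. Write $S = P\cap\ZZ^2$ and let $\Sigma = \sum_{m\in S} m \in \ZZ^2$ denote the sum of all lattice points of $P$. The crucial observation is the complementation bijection on subsets: a subset $T\subseteq S$ with $\#T = p$ corresponds to its complement $S\setminus T$, which has size $N-p$, and the two partial sums satisfy $\sum_{m\in T} m = \Sigma - \sum_{m\in S\setminus T} m$. Letting $T$ range over all $p$-subsets, so that $S\setminus T$ ranges over all $(N-p)$-subsets, this identity yields
$$\bigwedge^p P = \{\Sigma - x : x\in\bigwedge^{N-p} P\}.$$

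Next I would introduce the point reflection $T_\Sigma:\RR^2\to\RR^2$, $T_\Sigma(x) = \Sigma - x$, through the centre $\Sigma/2$. This is an affine transformation, and since $\Sigma\in\ZZ^2$ and $x\mapsto -x$ preserves the lattice, we have $T_\Sigma(\ZZ^2) = \ZZ^2$; thus $T_\Sigma$ is precisely of the type appearing in the definition of equivalence of lattice polytopes, and it is its own inverse. By the displayed identity, $\bigwedge^p P = T_\Sigma\bigl(\bigwedge^{N-p} P\bigr)$.

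Finally I would verify that such a map carries ``convex'' sets to ``convex'' sets. Suppose $\bigwedge^p P$ is ``convex'', say $\bigwedge^p P = Q\cap\ZZ^2$ for some convex lattice polytope $Q$. Applying the involution and using that $T_\Sigma$ restricts to a bijection of $\ZZ^2$, we get $\bigwedge^{N-p} P = T_\Sigma\bigl(\bigwedge^p P\bigr) = T_\Sigma(Q\cap\ZZ^2) = T_\Sigma(Q)\cap\ZZ^2$. The image $T_\Sigma(Q)$ is again a convex lattice polytope, because $T_\Sigma$ is affine (hence preserves convexity and the property of being a polytope) and sends the vertices of $Q$, which are lattice points, to lattice points. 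Therefore $\bigwedge^{N-p} P$ is the set of lattice points of the convex lattice polytope $T_\Sigma(Q)$, i.e.\ it is ``convex''.

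I expect no serious obstacle here: the entire content is the complementation identity together with the elementary fact that a lattice-preserving affine map sends the lattice points of a polytope to the lattice points of its image. The only point deserving a moment's care is confirming that $T_\Sigma$ genuinely preserves $\ZZ^2$ and that intersection with the lattice commutes with $T_\Sigma$ in the step $T_\Sigma(Q\cap\ZZ^2) = T_\Sigma(Q)\cap\ZZ^2$, which uses only that $T_\Sigma$ is a bijection of $\RR^2$ restricting to a bijection of $\ZZ^2$. I also note that nothing in this argument is special to dimension two, so the same reasoning gives the analogous statement in $\ZZ^n$.
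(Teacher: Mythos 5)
Your proof is correct and is essentially the paper's own argument: the paper's proof consists exactly of the complementation identity $\bigwedge^{N-p}P = u_0 - \bigwedge^p P$ (your $\Sigma$ is the paper's $u_0$), with the remaining verification left implicit. You have simply spelled out the routine step that the point reflection $x\mapsto\Sigma-x$ is a lattice-preserving affine involution and hence carries `convex' sets to `convex' sets, which is a reasonable amount of extra detail but not a different route.
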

\begin{proof}
Let $u_0$ be the sum of all lattice points of $P$. The result follows from the equality
$$\mathcal{D}_{N-p}P=u_0-\mathcal{D}_pP.$$
\end{proof}
We will use this lemma to reduce to the case where $p\leq N/2$. If $v$ is a vertex of a polytope $P$ we will denote by $P_v$ the convex hull of $P\cap\ZZ^n\backslash\{v\}$.
\begin{lemma}\label{union}
Let $P$ be a convex lattice polytope and $p$ an integer. If $\mathcal{D}_pP_v$ is `convex' for every vertex $v$ of $P$, and
$$\conv\mathcal{D}_pP=\bigcup_{v\textup{ vertex}}\conv\mathcal{D}_pP_v,$$
then $\mathcal{D}_pP$ is `convex'.
\end{lemma}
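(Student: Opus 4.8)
The plan is to unwind the definition of `convexity': a finite set $S\subset\ZZ^n$ is `convex' precisely when $S=\conv(S)\cap\ZZ^n$, so proving $\bigwedge^pP$ is `convex' amounts to showing that every lattice point of $\conv\bigwedge^pP$ already lies in $\bigwedge^pP$, the reverse inclusion $\bigwedge^pP\subseteq\conv\bigwedge^pP\cap\ZZ^n$ being automatic.

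First I would record the key containment $\bigwedge^pP_v\subseteq\bigwedge^pP$ for each vertex $v$. Since $v$ is a vertex of $P$ it is an extreme point, hence $v\notin\conv(P\cap\ZZ^n\setminus\{v\})=P_v$; combined with $P_v\subseteq P$ this gives $P_v\cap\ZZ^n=(P\cap\ZZ^n)\setminus\{v\}$. Consequently every $p$-element subset of $P_v\cap\ZZ^n$ is also a $p$-element subset of $P\cap\ZZ^n$, so passing to sums of subsets yields $\bigwedge^pP_v\subseteq\bigwedge^pP$.

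Now take an arbitrary lattice point $x\in\conv\bigwedge^pP$. By the hypothesized equality $\conv\bigwedge^pP=\bigcup_{v\textup{ vertex}}\conv\bigwedge^pP_v$ there is a vertex $v$ with $x\in\conv\bigwedge^pP_v$. Because $\bigwedge^pP_v$ is assumed `convex', the lattice point $x$ must in fact lie in $\bigwedge^pP_v$, and then the containment of the previous step places $x$ in $\bigwedge^pP$. As $x$ was arbitrary, this proves $\conv\bigwedge^pP\cap\ZZ^n\subseteq\bigwedge^pP$, which together with the automatic reverse inclusion gives $\bigwedge^pP=\conv\bigwedge^pP\cap\ZZ^n$, completing the argument.

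I expect no serious obstacle here: the content is essentially bookkeeping, and the only point demanding genuine care is the identity $P_v\cap\ZZ^n=(P\cap\ZZ^n)\setminus\{v\}$, which relies on $v$ being an honest vertex; for a non-extreme lattice point the removal could leave the convex hull unchanged and the relevant containment would fail to mean what we want. The real difficulty of the overall program lies elsewhere, namely in verifying the geometric hypothesis that $\conv\bigwedge^pP$ decomposes as the stated union of the $\conv\bigwedge^pP_v$; the present lemma merely serves to convert that decomposition, once available, into the desired `convexity' of $\bigwedge^pP$.
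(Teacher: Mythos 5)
Your proof is correct and follows essentially the same route as the paper's: use the hypothesized union decomposition to place a lattice point of $\conv\bigwedge^pP$ into some $\conv\bigwedge^pP_v$, invoke `convexity' of $\bigwedge^pP_v$, and conclude via $\bigwedge^pP_v\subseteq\bigwedge^pP$. The only difference is that you spell out the containment $P_v\cap\ZZ^n=(P\cap\ZZ^n)\setminus\{v\}$, which the paper leaves implicit.
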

\begin{proof}
For a set to be `convex' means that every lattice point in its convex hull is an element of the set. So let $m\in\conv\mathcal{D}_pP$ be a lattice point, we prove that it is in $\mathcal{D}_pP$. By the equality $m$ belongs to some $\conv\mathcal{D}_pP_v$. And because $\mathcal{D}_pP_v$ is `convex' it follows that $m\in\mathcal{D}_pP_v$, and so $m\in\mathcal{D}_pP$.
\end{proof}
This will be used to prove the theorem by induction on $N$, the number of lattice points.
\begin{lemma}\label{projection}
Let $P$ be an $n$-dimensional convex lattice polytope with $N$ lattice points and let $1\leq p\leq N-1$. Suppose
$$\bigcap_{v\textup{ vertex}}\conv\mathcal{D}_pP_v\neq\emptyset,$$
and suppose that every facet $E$ of $P$ with at least $N-p+1$ lattice points satisfies
$$\conv\mathcal{D}_{e-N+p}E=\bigcup_{v\textup{ vertex}}\conv\mathcal{D}_{e-N+p}E_v,$$
where $e=\#E\cap\ZZ^n$, then
$$\conv\mathcal{D}_pP=\bigcup_{v\textup{ vertex}}\conv\mathcal{D}_pP_v.$$
\end{lemma}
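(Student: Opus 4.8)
The plan is to prove the two inclusions separately. The inclusion $\bigcup_{v}\conv\bigwedge^pP_v\subseteq\conv\bigwedge^pP$ is automatic: the lattice points of each $P_v$ form a subset of $P\cap\ZZ^n$, so every $p$-fold sum occurring in $\bigwedge^pP_v$ already occurs in $\bigwedge^pP$. Abbreviate $Q=\conv\bigwedge^pP$ and $Q_v=\conv\bigwedge^pP_v$. For the reverse inclusion I would first reduce to the boundary. By the first hypothesis there is a point $x_0\in\bigcap_vQ_v$, and I claim it suffices to show $\partial Q\subseteq\bigcup_vQ_v$. Indeed, given $x\in Q$ with $x\neq x_0$, the ray from $x_0$ through $x$ meets $\partial Q$ in a point $y$ with $x\in[x_0,y]$; if $y\in Q_w$ for some vertex $w$, then since $x_0\in Q_w$ as well and $Q_w$ is convex, the whole segment $[x_0,y]$ lies in $Q_w$, whence $x\in Q_w$. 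This star-shaped step is exactly where the nonemptiness hypothesis is consumed.

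The core of the argument is then to cover each facet $F$ of $Q$. Every facet has an outer normal $\psi$, and the corresponding face is described by the ``top-$p$'' rule: ordering $P\cap\ZZ^n$ by decreasing value of $\langle\psi,\cdot\rangle$, one is forced to take the set $A=\{\,\langle\psi,\cdot\rangle>t\,\}$ of points strictly above the cut value $t$, and one is free to complete with any $c=p-\#A$ of the tied points $B=\{\,\langle\psi,\cdot\rangle=t\,\}$, so that $F=\bigl(\sum_{m\in A}m\bigr)+\conv\bigwedge^cB$. I would split into two cases. If some vertex $v$ of $P$ satisfies $\langle\psi,v\rangle<t$, then $v\notin A\cup B$, so deleting $v$ changes neither the threshold nor the selection, and $F$ is literally the face of $Q_v$ in direction $\psi$; in particular $F\subseteq Q_v$. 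Otherwise every vertex, and hence every lattice point, of $P$ satisfies $\langle\psi,\cdot\rangle\geq t$, so $t=\min_P\langle\psi,\cdot\rangle$ is attained and $B$ consists of the lattice points of the face $E$ of $P$ on which $\psi$ is minimized. Because $F$ is a facet, $\dim\conv\bigwedge^cB=n-1$, and a dimension count (the affine hull of $\bigwedge^cB$ has the dimension of $\mathrm{aff}\,B$ whenever $1\leq c\leq \#B-1$) forces $E$ to be a genuine facet of $P$ with $1\leq c\leq e-1$. Reading off $c=p-\#A=p-(N-e)=e-N+p$ shows that $E$ has $e=\#(E\cap\ZZ^n)\geq N-p+1$ lattice points.

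This is precisely the regime covered by the second hypothesis. Applying it to $E$ gives $\conv\bigwedge^{e-N+p}E=\bigcup_{v}\conv\bigwedge^{e-N+p}E_v$, and translating by the fixed vector $\sum_{m\notin E}m$ yields $F=\bigcup_{v\textup{ vertex of }E}\bigl(\sum_{m\notin E}m+\conv\bigwedge^{e-N+p}E_v\bigr)$. Each piece lies in $Q_v$: choosing all $N-e$ lattice points off $E$ together with any $e-N+p$ lattice points of $E_v$ (whose lattice points are exactly $(E\cap\ZZ^n)\setminus\{v\}$) is a selection of $p$ lattice points of $P_v$, so the corresponding sums lie in $\bigwedge^pP_v$ and convexity gives the containment. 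Since a vertex of the facet $E$ is a vertex of $P$, we conclude $F\subseteq\bigcup_vQ_v$ in this case too, so $\partial Q\subseteq\bigcup_vQ_v$ and the star-shaped reduction finishes the proof. The main obstacle is exactly this second case: recognizing that the ``unavoidable'' facets of $Q$ are those dual to facets $E$ of $P$ with at least $N-p+1$ lattice points, getting the bookkeeping $c=e-N+p$ right, and verifying the dimension count that pins $E$ down as a genuine facet; the first case and the trivial inclusion are routine by comparison.
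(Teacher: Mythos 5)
Your proposal is correct and follows essentially the same route as the paper: the paper also uses the star-shaped reduction from a point $u\in\bigcap_v\conv\bigwedge^pP_v$, derives the same ``top-$p$'' threshold description of a facet $F$ of $\conv\bigwedge^pP$ via the exchange argument, and splits into the same two cases (a vertex of $P$ strictly below the threshold, versus the threshold being the minimum on $P$, in which case $E$ is a facet with $e\geq N-p+1$ and the second hypothesis applies). The only cosmetic difference is that the paper argues pointwise (projecting a given $m_0$ to the boundary) rather than covering all of $\partial Q$ first, and it proves the facet structure in detail where you assert it with the exchange argument implicit.
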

This lemma allows us to satisfy the requirement of lemma \ref{union}
\begin{proof}
Let $u$ be an element of the set $\bigcap_{v\textup{ vertex}}\conv\mathcal{D}_pP_v$ and let $m_0\in\conv\mathcal{D}_pP$ be a point, we have to prove that $m_0$ is in some $\conv\mathcal{D}_pP_v$. Of course we can suppose that $m_0$ is not equal to $u$, because then the conclusion would be obvious. We claim there exists a facet $F$ of $\conv\mathcal{D}_pP$ such that $m_0$ is in the convex hull of $F\cup\{u\}$. To see this, project $m_0$ away from $u$ onto the boundary of $\conv\mathcal{D}_pP$. Let us call this boundary point $m_1$, so $m_0$ lies on the line segment $[u,m_1]$. Then $m_1$ belongs to some facet $F$ of $\conv\mathcal{D}_pP$, so $m_0\in\conv(\{u\}\cup F)$.\\

Let $\ell:\RR^n\longrightarrow\RR$ be any linear map with the property that $F$ is the set of points in $\conv\mathcal{D}_pP$ where $\ell$ attains its maximum. Call this maximum $c$.\newline For any vertex $w$ of $F$ we know that $w$ can be written as $\sum_{m\in S}m$ for some $S\subset P\cap\ZZ^n$ of size $p$. (This is because the vertex $w$ is an extremal point of $\conv\mathcal{D}_pP$ and hence an element of $\mathcal{D}_pP$.) Let $a$ be the minimal value that $\ell$ attains on $S$, and let $b$ be the number of $m\in S$ with $\ell(m)=a$. Then every point $m'$ of $P\cap\ZZ^n$ with $\ell(m')>a$ will be in $S$, otherwise taking some $m\in S$ with $\ell(m)=a$ and summing over the set $(S\cup\{m'\})\backslash\{m\}$ would yield a point of $\conv\mathcal{D}_pP$ where $\ell$ achieves a greater value than $c=\ell(w)$. Now we have
$$S=(S\cap\ell^{-1}(a))\cup\bigcup_{i\geq a+1}(P\cap\ZZ^n\cap\ell^{-1}(i))\text{, and so}$$
$$p=\#S=b+\sum_{i\geq a+1}\#(P\cap\ZZ^n\cap\ell^{-1}(i))$$
Since $p$ does not depend on the choice of $w$, we conclude that $a$ and $b$ do not depend on the choice of $w$ either. To see this, take a different vertex $w'$ of $F$ that gives values $a',b'$ and suppose for instance that $a'<a$. Now note that the equation $p=b'+\sum_{i\geq a'+1}\#(S\cap\ell^{-1}(i))$ holds. But this sum includes the term $\#(S\cap\ell^{-1}(a))$, which is at least $b$, and $b'>0$, so this expression must be greater than the one with $a$ and $b$ instead of $a'$ and $b'$, which is a contradiction, because both sums equal $p$.
%$$c=\ell(w)=ab+\sum_{i\geq a+1}i\#(P\cap\ell^{-1}(i))$$
Now there are two cases.\\

\noindent\underline{Case 1: $a$ is not the minimum that $\ell$ attains on $P$.}\newline
In this case let $v$ be any vertex of $P$ with $\ell(v)<a$ (for instance one where $\ell$ attains its minimum on $P$). Then all vertices $w$ of $F$ are contained in $\conv\mathcal{D}_pP_v$, and therefore so is $F$. (The reason $w\in\conv\mathcal{D}_pP_v$ is that $w$ is the sum of points in a set $S$ as above and $v\notin S$ so $S\subset P_v$, and so $w\in\mathcal{D}_p P_v$.) Since $u$ is also in $\conv\mathcal{D}_pP_v$, and $m_0\in\conv(\{u\}\cup F)$ we conclude that $m_0\in\conv\mathcal{D}_pP_v$, as desired.\\

\noindent\underline{Case 2: $a$ is the minimum that $\ell$ attains on $P$.}\newline
In this case $E:=P\cap\ell^{-1}(a)$ is a face of $P$. Let $e=\#E\cap\ZZ^n$ and let $u_1$ be the sum of all lattice points in $P\backslash E$ (there are $N-e$ such points). Then
$$F=u_1+\conv\mathcal{D}_{p+e-N}E.$$
This follows from our analysis of vertices $w$ of $F$, and it also follows that $p+e-N\geq 1$. Since $F$ is $n-1$-dimensional, so is $E$, so $E$ is a facet. This means we can use the hypothesis of the lemma:
$$\conv\mathcal{D}_{e-N+p}E=\bigcup_{v\textup{ vertex}}\conv\mathcal{D}_{e-N+p}E_v.$$
The union is of course over the vertices of $E$. Recall from the beginning of the proof that we projected $m_0$ to the boundary of $\conv\mathcal{D}_P$, yielding a point $m_1\in F$. Now $m_1-u_1$ is in the left hand side of the above equation, hence it belongs to some
$\conv\mathcal{D}_{e-N+p}E_v$. Therefore we have
$$m_1\in u_1+\conv\mathcal{D}_{e-N+p}E_v\subset\conv\mathcal{D}_pP_v.$$
Since $u$ also belongs to the right hand side of this inclusion, and $m_0\in[m_1,u]$, we conclude that $m_0\in\conv\mathcal{D}_pP_v$, finishing the proof.
\end{proof}
As a corollary we obtain
\begin{lemma}\label{intersection}
If $P$ is a convex lattice polygon with $N=\#(P\cap\ZZ^2)\geq 4$ and if $1\leq p\leq N/2$ is an integer such that
$$\bigcap_{v\textup{ vertex}}\conv\mathcal{D}_pP_v\neq\emptyset,$$
then
$$\conv\mathcal{D}_pP=\bigcup_{v\textup{ vertex}}\conv\mathcal{D}_pP_v.$$
\end{lemma}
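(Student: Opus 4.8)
The plan is to deduce this from Lemma \ref{projection} applied with $n=2$. We are handed the nonemptiness of $\bigcap_{v}\conv\bigwedge^pP_v$, which is exactly the first hypothesis of Lemma \ref{projection}, and since $1\leq p\leq N/2<N$ we certainly have $1\leq p\leq N-1$. So the only thing left to check is the facet hypothesis: for every facet $E$ of $P$ with $e:=\#(E\cap\ZZ^2)\geq N-p+1$ we must verify $\conv\bigwedge^{e-N+p}E=\bigcup_{v}\conv\bigwedge^{e-N+p}E_v$. The key structural remark is that in the plane a facet is an edge, hence one-dimensional, so this reduces to a clean computation on a single segment.

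Concretely, an edge $E$ has lattice points forming an arithmetic progression $E\cap\ZZ^2=\{a+id\mid 0\leq i\leq e-1\}$ with $d$ the primitive direction vector, and it has exactly two vertices, namely $a$ and $a+(e-1)d$. I would observe that for any $q$ the set $\bigwedge^qE$ is an interval along $d$: summing the $q$-subset indexed by $I$ gives $qa+\bigl(\sum_{i\in I}i\bigr)d$, and the attainable index-sums fill the whole integer interval from $\binom{q}{2}$ to $qe-\binom{q+1}{2}$. Deleting the vertex $a$ removes the smallest attainable index-sum while deleting $a+(e-1)d$ removes the largest, so each $\conv\bigwedge^qE_v$ is the full interval $\conv\bigwedge^qE$ with one endpoint pulled inward.

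The crux is then one-dimensional: the two shortened intervals cover $\conv\bigwedge^qE$ precisely when they still overlap, and a direct computation (the raised-left endpoint $\binom{q+1}{2}$ must not exceed the lowered-right endpoint $qe-q-\binom{q+1}{2}$) shows this happens if and only if $e\geq q+2$. Putting $q=e-N+p$, the facet-selection condition $e\geq N-p+1$ is equivalent to $q\geq 1$, and the covering condition $e\geq q+2$ is equivalent to $p\leq N-2$. Since $N\geq 4$ we have $N/2\leq N-2$, so the standing assumption $p\leq N/2$ forces $p\leq N-2$, and the facet hypothesis is satisfied for every qualifying edge. Lemma \ref{projection} then delivers the desired equality.

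The main obstacle I anticipate is pinning down the exact threshold $e\geq q+2$ for the interval covering and matching it to the hypothesis $p\leq N/2$: the bound is sharp, and it is precisely $N\geq 4$ that yields $N/2\leq N-2$, so this arithmetic must be handled carefully. A secondary point to dispose of is the degenerate possibility that $P$ is itself one-dimensional (a segment), where Lemma \ref{projection} does not apply; there, however, the very same interval computation applies directly to $P$'s two endpoints and gives the conclusion immediately.
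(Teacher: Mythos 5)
Your proposal is correct, and at the top level it follows the same reduction as the paper: feed the given nonempty intersection into Lemma \ref{projection} with $n=2$, so that all remaining work lies in verifying the facet hypothesis for each edge $E$ with $e\geq N-p+1$ lattice points, plus the degenerate case where $P$ is itself a segment. Where you genuinely diverge is in how that one-dimensional statement is established. You compute: parametrize the edge, track the extreme index-sums, and find the sharp overlap threshold $e\geq q+2$; your endpoint formulas $\binom{q+1}{2}$ and $qe-q-\binom{q+1}{2}$ and the equivalence of the threshold with $p\leq N-2$ all check out, and $p\leq N/2\leq N-2$ closes the argument. The paper instead recurses: it observes that the sum of $p'=e-N+p$ non-endpoint lattice points of $E$ --- which exist precisely because $p'\leq e-2$, i.e.\ again the inequality $p\leq N-2$ --- lies in $\bigcap_{v}\conv\bigwedge^{p'}E_v$, and then applies Lemma \ref{projection} a second time, with $n=1$, to $E$ itself, where the facet condition is vacuous because facets of a segment are single points; the same $n=1$ application is also how the paper disposes of a one-dimensional $P$. (So your remark that Lemma \ref{projection} ``does not apply'' to a segment is slightly off --- it applies with $n=1$ --- but your direct treatment of that case is equally valid.) The trade-off: your computation exhibits the sharp threshold and makes visible that the hypothesis $p\leq N/2$ could be relaxed to $p\leq N-2$ in this lemma, while the paper's recursion avoids all explicit arithmetic and reuses the projection lemma across dimensions. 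It is worth noting that both arguments ultimately rest on exactly the same inequality $p\leq N-2$: in the paper it guarantees enough interior points of the edge to build the intersection certificate, in yours it is the interval-overlap condition.
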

\begin{proof}
Suppose first that $P$ is two-dimensional.
Let $E$ be a face of $P$ with $e=\#(E\cap\ZZ^2)\geq N-p+1$. If we can prove that
$$\conv\mathcal{D}_{e-N+p}E=\bigcup_{v\textup{ vertex}}\conv\mathcal{D}_{e-N+p}E_v$$
then we can apply lemma \ref{projection} and we are done. Let $p'=e-N+p$, then $1\leq p'\leq e-2$, by the assumptions that $p\leq N/2$ and $N\geq 4$. This also gives $e\geq3$. Of course $E$ is just a line segment and it has exactly two vertices namely the end points. Taking the sum of $p'$ lattice points of this line segment that aren't end points we find an element of $\bigcap_{v\textup{ vertex}}\conv\mathcal{D}_{p'}E_v$, which is therefore non-empty. This actually allows us to apply lemma \ref{projection} to $E$ and $p'$ with $n=1$. all we have to check is that no facet of $E$ has more than $e-p'$ lattice points. But of course facets of $E$ consist of just one point so this is fine. This also deals with the case when $P$ is one-dimensional as we can then apply the same reasoning to $P$ that we applied to $E$.
\end{proof}
\begin{lemma}\label{inclusion}
Let $P$ be a convex lattice polytope and $1\leq p\leq N=\#(P\cap\ZZ^n)$ an integer, then for any bigger polytope $Q$ containing $P$ we have
$$\bigcap_{v\textup{ vertex}}\mathcal{D}_pP_v\subset\bigcap_{v\textup{ vertex}}\mathcal{D}_pQ_v.$$
So if the left hand side is non-empty, then so is the right hand side.
\end{lemma}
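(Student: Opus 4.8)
The plan is to unwind the notation and then split on whether a given vertex of $Q$ already lies in $P$. First I would record the basic fact that for any vertex $v$ of $P$, since $v$ is an extreme point it is not a convex combination of the remaining lattice points, so $P_v\cap\ZZ^n=(P\cap\ZZ^n)\setminus\{v\}$. Consequently $\bigwedge^pP_v$ is exactly the set of sums of $p$ distinct lattice points of $P$ none of which equals $v$, and the analogous description holds for each $Q_w$. In particular every $\bigwedge^pP_v$ is contained in $\bigwedge^pP$, so the left hand side is contained in $\bigwedge^pP$; if the left hand side is empty (which happens precisely when $p=N$, since then each $P_v$ has only $N-1<p$ lattice points) there is nothing to prove, and I may assume it is nonempty.

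Next I would fix an element $x$ of the left hand side together with an arbitrary vertex $w$ of $Q$; the goal is to exhibit $x$ as a sum of $p$ distinct lattice points of $Q$ avoiding $w$, that is, to show $x\in\bigwedge^pQ_w$. I would distinguish two cases according to whether $w\in P$. If $w\notin P\cap\ZZ^n$, then any expression of $x$ as a sum of $p$ distinct lattice points of $P$ (available since $x\in\bigwedge^pP$) automatically avoids $w$, and those points lie in $Q_w\cap\ZZ^n=(Q\cap\ZZ^n)\setminus\{w\}$, giving $x\in\bigwedge^pQ_w$ at once.

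If instead $w\in P$, the key observation is that $w$ must be a vertex of $P$: an extreme point of $Q$ that happens to lie in the smaller polytope $P$ cannot be a convex combination of other points of $P\subseteq Q$, so it is extreme in $P$ as well. Then, because $x$ lies in the left hand side, we have $x\in\bigwedge^pP_w$, meaning $x$ is already a sum of $p$ distinct lattice points of $P$ avoiding $w$; these points lie in $(Q\cap\ZZ^n)\setminus\{w\}$, so again $x\in\bigwedge^pQ_w$. Since $w$ was an arbitrary vertex of $Q$, this places $x$ in the right hand side and establishes the inclusion.

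The argument is essentially bookkeeping once the two cases are separated, and the final sentence of the statement about nonemptiness follows immediately from the displayed inclusion. The only step requiring genuine care is the reduction in the second case, namely that a vertex of the larger polytope that lies in $P$ is automatically a vertex of $P$; this is precisely what makes the hypothesis $x\in\bigwedge^pP_w$ applicable, and it is where the containment $P\subseteq Q$ is really used.
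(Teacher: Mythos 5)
Your proof is correct and takes essentially the same approach as the paper: fix an element of the left hand side and an arbitrary vertex $w$ of $Q$, then split into the cases $w\notin P$ and $w\in P$, using in the latter case that a vertex of $Q$ lying in $P$ must itself be a vertex of $P$ (a point the paper asserts without justification and you rightly spell out). One caveat: your parenthetical claim that the left hand side is empty \emph{precisely} when $p=N$ is false --- it can be empty for $p<N$ as well, e.g.\ for the exceptional triangle with four lattice points and $p=2$, which is exactly why lemma \ref{good} imposes $N\geq 5$ --- but since that remark is never used in your argument, it does not affect the correctness of the proof.
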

\begin{proof}
Let $m\in\bigcap_{v\textup{ vertex}}\mathcal{D}_pP_v$, we prove it is in $\bigcap_{w\textup{ vertex}}\mathcal{D}_pQ_w$. Let $w$ be a vertex of $Q$, we have to show that $m\in\mathcal{D}_pQ_w$. If $w\notin P$ then we have $m\in\mathcal{D}_pP_v\subset\mathcal{D}_pP\subset\mathcal{D}_pQ_w$ and we are done, so suppose $w\in P$. Then $w$ is a vertex of $P$, so $m\in\mathcal{D}_pP_w\subset\mathcal{D}_pQ_w$ and we are done.
\end{proof}
\begin{lemma}\label{good}
If $P$ is a convex lattice polygon with $N=\#(P\cap\ZZ^2)\geq 5$ then $$\bigcap_{v\textup{ vertex}}\mathcal{D}_pP_v\neq\emptyset,$$
for all integers $1\leq p\leq N/2$.
\end{lemma}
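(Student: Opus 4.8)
The plan is to produce a single lattice point $m$ that lies in $\bigwedge^p P_v$ for every vertex $v$ simultaneously. Write $V$ for the number of vertices of $P$ and $k=N-V$ for the number of lattice points that are \emph{not} vertices. The clean case is $p\le k$: then $P$ has at least $p$ non-vertex lattice points, and the sum $m$ of any $p$ of them is realised by a $p$-subset avoiding \emph{all} vertices at once, so $m\in\bigwedge^p P_v$ for every $v$. Thus the only $p$ needing work are those with $k<p\le N/2$, and for such a $p$ to exist we must have $N-V<N/2$, i.e. $V>N/2$: more than half the lattice points of $P$ are vertices. A short check shows this regime also forces $V\ge 4$ and $k\ge 1$ (if $k=0$ then $P$ has no interior points, whence $V\le 4$ and $N\le 4$, contradicting $N\ge 5$).

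For these exceptional polygons I would close the argument by a finite reduction. A convex lattice polygon with $V$ vertices has area bounded below by a cubic in $V$ (Andrews), while its area is smaller than $N=\#(P\cap\ZZ^2)$; together with $V>N/2$ this bounds $V$, hence $N$, and up to unimodular equivalence there are only finitely many lattice polygons with a bounded number of lattice points. Moreover Lemma~\ref{inclusion} lets me replace $P$ by any sub-polytope $P'\subseteq P$ for which $\bigcap_{v}\bigwedge^p P'_v\neq\emptyset$ with the same $p$, so it suffices to locate inside each exceptional $P$ a convenient sub-configuration. Useful ones are a centrally symmetric set, where pairing antipodal points about the centre gives a sum $m$ that avoids any prescribed vertex by re-choosing one pair, or simply a sub-polytope carrying enough non-vertex points.

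The step I expect to be the genuine obstacle is exactly the special case $k<p\le N/2$. The naive idea of one uniform representation fails: for the quadrilateral with vertices $(0,0),(1,0),(2,1),(0,1)$ and non-vertex point $(1,1)$, with $p=2$, neither pairing the non-vertex point with a single vertex nor pairing two vertices works on its own, yet the valid choice $m=(1,1)$ succeeds because it is realised by genuinely \emph{different}, mixed $p$-subsets for different vertices (use $(1,0)+(0,1)$ to avoid $(0,0)$, but $(0,0)+(1,1)$ to avoid $(1,0)$). The heart of a uniform proof is therefore an exchange argument: start from one base $p$-subset summing to a well-chosen central $m$, and show that each vertex it contains can be individually rerouted, swapped out through an equal-sum relation $v+a=b+c$ with $b,c$ not yet selected, preserving the sum $m$. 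Such equal-sum relations are plentiful here (a pigeonhole count on pairwise sums yields many disjoint pairs with equal sum once $N$ is not tiny), and the crux is to organise them, via a Hall-type matching, so that every vertex can be released. I expect verifying this matching condition, rather than the underlying geometry, to be the main difficulty, and I would fall back on the finite reduction above to settle any residually small polygons by direct inspection.
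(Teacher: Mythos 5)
Your reduction to the range $k<p\le N/2$ (with $k$ the number of non-vertex lattice points) is exactly the paper's first step, and your finiteness observation is correct and genuinely different in spirit from the paper: Andrews' bound $A\ge cV^3$, Pick's inequality $A<N$, and $V>N/2$ together force $V^2<2/c$, so the exceptional polygons form a finite family up to equivalence, whereas the paper never bounds this family and instead handles all large $N$ by induction. (Your justification that $k=0$ forces $N\le 4$ via ``no interior points'' is loose -- the clean argument is that among five vertices two agree mod $2$, and their midpoint is then a non-vertex lattice point -- but the claim itself is true.) The genuine gap is that everything after the reduction is deferred rather than proved. First, the finiteness is not effective as stated: Andrews' theorem with an unspecified constant tells you the exceptional list ends somewhere, but not where, so ``direct inspection'' has no defined scope until you fix an explicit constant and compute the resulting bound on $N$. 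Second, and decisively, the inspection is never performed. You verify one polygon (the quadrilateral with boundary point $(1,1)$, $p=2$); the paper's proof of this lemma consists almost entirely of that verification -- explicit equal-sum certificates $v+a=b+c$, with no vertex appearing on both sides, for every relevant polygon with $N\in\{5,6,8\}$, and then for $N\ge 9$ an inductive construction whose key step finds an edge $[(0,0),(1,0)]$ with exactly two lattice points, normalizes the lattice points $(b,1)$ on the next row, and uses the exchange $(0,0)+(b_0+1,1)=(1,0)+(b_0,1)$ to promote a point of $\bigcap_v\bigwedge^{p-2}Q_v$, for a sub-polygon $Q$ with $N-4$ lattice points, into a point of $\bigcap_v\bigwedge^pP_v$. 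That case analysis and induction is the actual mathematical content of the lemma, and it is absent from your proposal.

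Your closing idea of an exchange argument organized by a Hall-type matching is the right instinct -- the paper's certificates are precisely such equal-sum exchanges, and its inductive step is one such exchange made uniform -- but as written it is a heuristic: you state yourself that you cannot verify the matching condition, and your fallback is the unperformed finite check. So the proposal correctly isolates the easy half of the lemma and correctly circumscribes the hard half, but the hard half is missing; to complete it along your lines you would need an explicit vertex-count bound (to delimit the enumeration) and then the full case-by-case verification, which is comparable in length to the paper's own proof.
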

\begin{proof}
Let us call a polygon $p$-good if $\bigcap_{v\textup{ vertex}}\mathcal{D}_pP_v\neq\emptyset$. Let us call a polygon good if it is $p$-good for all integers $1\leq p\leq N/2$, so we have to prove that all polygons with at least 5 lattice points are good. Note that if one polygon is $p$-good, then any polygon containing the given one is also $p$-good, by lemma \ref{inclusion}. So if $P$ is a polygon with an odd number of vertices and some $P_v$ is good then $P$ is also good. This means it is enough to prove the lemma for polygons with five lattice points and polygons with an even number of lattice points (at least six).\\

Note that if a polygon has at least $p$ lattice points that aren't vertices then it is $p$-good, because the sum of $p$ lattice points that aren't vertices is in the intersection $\bigcap_{v\textup{ vertex}}\mathcal{D}_pP_v$. So if $P$ has at most $N/2$ vertices then it is good.
We now prove the lemma for polygons with five, six or eight lattice points, by checking it explicitly for those with more than $N/2$ vertices.
We begin with the case $N=5$, at least 3 vertices.
\begin{center}
\begin{tikzpicture}
\draw [<->] (-.5,1.5) -- (-.5,0) -- (1,0);
\draw[thick] (-.5,.5) -- (-.5,0) -- (1,0) -- (-.5,.5);
\fill (0,0) circle (1.5 pt);
\fill (-.5,0) circle (1.5 pt);
\fill (-.5,.5) circle (1.5 pt);
\fill (.5,0) circle (1.5 pt);
\fill (1,0) circle (1.5 pt);
\end{tikzpicture}
\quad\quad
\begin{tikzpicture}
\draw [<->] (-.5,1.5) -- (-.5,0) -- (1,0);
\draw[thick] (-.5,0) -- (.5,0) -- (0,1) -- (-.5,0);
\fill (0,0) circle (1.5 pt);
\fill (0,.5) circle (1.5 pt);
\fill (0,1) circle (1.5 pt);
\fill (.5,0) circle (1.5 pt);
\fill (-.5,0) circle (1.5 pt);
\end{tikzpicture}
\quad\quad
\begin{tikzpicture}
\draw [<->] (-.5,1) -- (-.5,-.5) -- (1,-.5);
\draw[thick] (0,.5) -- (.5,0) -- (0,-.5) -- (-.5,0) -- (0,.5);
\fill (0,0) circle (1.5 pt);
\fill (-.5,0) circle (1.5 pt);
\fill (0,-.5) circle (1.5 pt);
\fill (.5,0) circle (1.5 pt);
\fill (0,.5) circle (1.5 pt);
\end{tikzpicture}
\quad\quad
\begin{tikzpicture}
\draw [<->] (-.5,1) -- (-.5,-.5) -- (1,-.5);
\draw[thick] (-.5,.5) -- (.5,0) -- (0,-.5) -- (-.5,0) -- (-.5,.5);
\fill (0,0) circle (1.5 pt);
\fill (-.5,0) circle (1.5 pt);
\fill (0,-.5) circle (1.5 pt);
\fill (.5,0) circle (1.5 pt);
\fill (-.5,.5) circle (1.5 pt);
\end{tikzpicture}
\quad\quad
\begin{tikzpicture}
\draw [<->] (-.5,1.5) -- (-.5,0) -- (1,0);
\draw[thick] (-.5,.5) -- (-.5,0) -- (.5,0) -- (0,.5)  -- (-.5,.5);
\fill (0,0) circle (1.5 pt);
\fill (-.5,0) circle (1.5 pt);
\fill (-.5,.5) circle (1.5 pt);
\fill (.5,0) circle (1.5 pt);
\fill (0,.5) circle (1.5 pt);
\end{tikzpicture}
\end{center}
Note that all of these are $1$-good, as they all have at least one lattice point that isn't a vertex.
For the first one $(1,0)+(2,0)$ is in the intersection. For the second one $(1,0)+(1,1)$ is in the intersection.
For the third one $(1,0)+(1,2)=(0,1)+(2,1)$ is in the intersection, for the fourth one $(1,0)+(0,2)=(0,1)+(1,1)$ is in the intersection, and for the fifth one $(0,1)+(1,0)=(0,0)+(1,1)$ is in the intersection. In each case we found an element of $\mathcal{D}_2P$ which for any vertex $v$ can always be written as a sum of two distinct lattice points in $P\backslash\{v\}$. So they are all 2-good, and hence good. We now continue with the case $N=6$ where there are at least 4 vertices. The following is a list of all convex lattice polygons with 6 lattice points and at least 4 vertices, up to unimodular equivalence. We label the lattice points with letters from the alphabet.
\begin{center}
\begin{tikzpicture}
\draw (1.5,0) -- (1.5,.5) -- (1,1) -- (0,.5) -- (1.5,0);
\node at (1.5,0) {$a$};
\node at (1,1) {$b$};
\node at (0,.5) {$c$};
\node at (.5,.5) {$d$};
\node at (1,.5) {$e$};
\node at (1.5,.5) {$f$};
\end{tikzpicture}
\quad\quad\quad
\begin{tikzpicture}
\draw (1,0) -- (1.5,.5) -- (1,1) -- (0,.5) -- (1,0);
\node at (1,0) {$a$};
\node at (1,1) {$b$};
\node at (1.5,.5) {$c$};
\node at (1,.5) {$d$};
\node at (.5,.5) {$e$};
\node at (0,.5) {$f$};
\end{tikzpicture}
\quad\quad\quad
\begin{tikzpicture}
\draw (1.5,0) -- (.5,.5) -- (0,0) -- (1,-.5) -- (1.5,0);
\node at (0,0) {$b$};
\node at (1,0) {$d$};
\node at (.5,0) {$c$};
\node at (1.5,0) {$e$};
\node at (1,-.5) {$f$};
\node at (.5,.5) {$a$};
\end{tikzpicture}
\quad\quad\quad
\begin{tikzpicture}
\draw (1.5,0) -- (.5,.5) -- (0,.5) -- (0,0) -- (1.5,0);
\node at (0,0) {$c$};
\node at (1,0) {$e$};
\node at (.5,0) {$d$};
\node at (0,.5) {$a$};
\node at (1.5,0) {$f$};
\node at (.5,.5) {$b$};
\end{tikzpicture}
\end{center}
\begin{center}
\begin{tikzpicture}
\draw (0,1) -- (.5,0) -- (1,.5) -- (1,1) -- (0,1);
\node at (0,1) {$a$};
\node at (.5,0) {$b$};
\node at (.5,.5) {$c$};
\node at (1,.5) {$d$};
\node at (.5,1) {$e$};
\node at (1,1) {$f$};
\end{tikzpicture}
\quad\quad\quad
\begin{tikzpicture}
\draw (0,.5) -- (.5, -.5) -- (1.5,0) -- (.5,.5) -- (0,.5);
\node at (0,.5) {$a$};
\node at (1,0) {$d$};
\node at (.5,0) {$c$};
\node at (1.5,0) {$e$};
\node at (.5,-.5) {$f$};
\node at (.5,.5) {$b$};
\end{tikzpicture}
\quad\quad\quad
\begin{tikzpicture}
\draw (0,.5) -- (0,0) -- (1,0) -- (1,.5) -- (0,.5);
\node at (0,0) {$d$};
\node at (1,0) {$f$};
\node at (.5,0) {$e$};
\node at (0,.5) {$a$};
\node at (1,.5) {$c$};
\node at (.5,.5) {$b$};
\end{tikzpicture}
\quad\quad\quad
\begin{tikzpicture}
\draw (0,.5) -- (.5,0) -- (1,.5) -- (1,1) -- (.5,1) -- (0,.5);
\node at (0,.5) {$a$};
\node at (.5,0) {$b$};
\node at (.5,.5) {$c$};
\node at (1,.5) {$d$};
\node at (.5,1) {$e$};
\node at (1,1) {$f$};
\end{tikzpicture}
\end{center}
All of these polygons are 1-good and 2-good as removing any vertex yields a polygon with five vertices and we already checked that they are 1-good and 2-good. We now show they are all 3-good and hence good. On the top row from left to right we have $d+e+f=d+b+a$, $e+d+c=b+d+a$, $b+c+e=a+c+f$ and $a+d+e=b+c+e$. On the bottom row from left to right we have $e+c+d=e+f+b$, $a+c+e=b+c+d$, $a+b+f=d+b+c$ and $a+c+d=e+c+b$. In each case there is no vertex that appears on both sides of the equation, so they are all 3-good, and hence good. Finally we move on to the case $N=8$. Here is a comprehensive list of all convex lattice polygons with 8 lattice points and at least 5 vertices, up to unimodular equivalence.
\begin{center}
\begin{tikzpicture}
\draw (1,1) -- (0,.5) -- (1,0) -- (1.5,0) -- (1.5,1) -- (1,1);
\node at (1,1) {$a$};
\node at (1.5,1) {$b$};
\node at (0,.5) {$c$};
\node at (.5,.5) {$d$};
\node at (1,.5) {$e$};
\node at (1.5,.5) {$f$};
\node at (1,0) {$g$};
\node at (1.5,0) {$h$};
\end{tikzpicture}
\quad\quad\quad
\begin{tikzpicture}
\draw (.5,1) -- (0,.5) -- (.5,0) -- (1,0) -- (1.5,1) -- (.5,1);
\node at (.5,1) {$a$};
\node at (1,1) {$b$};
\node at (1.5,1) {$c$};
\node at (0,.5) {$d$};
\node at (.5,.5) {$e$};
\node at (1,.5) {$f$};
\node at (.5,0) {$g$};
\node at (1,0) {$h$};
\end{tikzpicture}
\quad\quad\quad
\begin{tikzpicture}
\draw (0,1) -- (0,0) -- (1,0) -- (1,.5) -- (.5,1) -- (0,1);
\node at (0,1) {$a$};
\node at (.5,1) {$b$};
\node at (0,.5) {$c$};
\node at (.5,.5) {$d$};
\node at (1,.5) {$e$};
\node at (0,0) {$f$};
\node at (.5,0) {$g$};
\node at (1,0) {$h$};
\end{tikzpicture}
\end{center}
\begin{center}
\begin{tikzpicture}
\draw (.5,1) -- (0,.5) -- (.5,0) -- (1.5,.5) -- (1.5,1) -- (.5,1);
\node at (.5,1) {$a$};
\node at (1,1) {$b$};
\node at (1.5,1) {$c$};
\node at (0,.5) {$d$};
\node at (.5,.5) {$e$};
\node at (1,.5) {$f$};
\node at (1.5,.5) {$g$};
\node at (.5,0) {$h$};
\end{tikzpicture}
\quad\quad\quad
\begin{tikzpicture}
\draw (.5,1) -- (0,.5) -- (1,0) -- (1.5,.5) -- (1.5,1) -- (.5,1);
\node at (.5,1) {$a$};
\node at (1,1) {$b$};
\node at (1.5,1) {$c$};
\node at (0,.5) {$d$};
\node at (.5,.5) {$e$};
\node at (1,.5) {$f$};
\node at (1.5,.5) {$g$};
\node at (1,0) {$h$};
\end{tikzpicture}
\quad\quad\quad
\begin{tikzpicture}
\draw (.5,1) -- (0,.5) -- (.5,0) -- (1,0) -- (1.5,.5) -- (1,1) -- (.5,1);
\node at (.5,1) {$a$};
\node at (1,1) {$b$};
\node at (0,.5) {$c$};
\node at (.5,.5) {$d$};
\node at (1,.5) {$e$};
\node at (1.5,.5) {$f$};
\node at (.5,0) {$g$};
\node at (1,0) {$h$};
\end{tikzpicture}
\end{center}
\begin{center}
\begin{tikzpicture}
\draw (.5,1) -- (0,.5) -- (0,0) -- (.5,0) -- (1.5,.5) -- (1,1) -- (.5,1);
\node at (.5,1) {$a$};
\node at (1,1) {$b$};
\node at (0,.5) {$c$};
\node at (.5,.5) {$d$};
\node at (1,.5) {$e$};
\node at (1.5,.5) {$f$};
\node at (0,0) {$g$};
\node at (.5,0) {$h$};
\end{tikzpicture}
\quad\quad\quad
\begin{tikzpicture}
\draw (1.5,1) -- (0,.5) -- (1,0) -- (1.5,0) -- (2,1) -- (1.5,1);
\node at (1.5,1) {$a$};
\node at (2,1) {$b$};
\node at (0,.5) {$c$};
\node at (.5,.5) {$d$};
\node at (1,.5) {$e$};
\node at (1.5,.5) {$f$};
\node at (1,0) {$g$};
\node at (1.5,0) {$h$};
\end{tikzpicture}
\quad\quad\quad
\begin{tikzpicture}
\draw (2,1) -- (0,.5) -- (1,0) -- (1.5,0) -- (2,.5) -- (2,1);
\node at (2,1) {$a$};
\node at (0,.5) {$b$};
\node at (.5,.5) {$c$};
\node at (1,.5) {$d$};
\node at (1.5,.5) {$e$};
\node at (2,.5) {$f$};
\node at (1,0) {$g$};
\node at (1.5,0) {$h$};
\end{tikzpicture}
\end{center}
\begin{center}
\begin{tikzpicture}
\draw (1.5,1) -- (0,.5) -- (1,0) -- (1.5,0) -- (2,.5) -- (1.5,1);
\node at (1.5,1) {$a$};
\node at (0,.5) {$b$};
\node at (.5,.5) {$c$};
\node at (1,.5) {$d$};
\node at (1.5,.5) {$e$};
\node at (2,.5) {$f$};
\node at (1,0) {$g$};
\node at (1.5,0) {$h$};
\end{tikzpicture}
\quad\quad\quad
\begin{tikzpicture}
\draw (1,1) -- (0,.5) -- (1,0) -- (1.5,0) -- (2,.5) -- (1,1);
\node at (1,1) {$a$};
\node at (0,.5) {$b$};
\node at (.5,.5) {$c$};
\node at (1,.5) {$d$};
\node at (1.5,.5) {$e$};
\node at (2,.5) {$f$};
\node at (1,0) {$g$};
\node at (1.5,0) {$h$};
\end{tikzpicture}
\quad\quad\quad
\begin{tikzpicture}
\draw (1.5,1.5) -- (.5,1) -- (0,.5) -- (1,0) -- (1.5,1) -- (1.5,1.5);
\node at (1.5,1.5) {$a$};
\node at (.5,1) {$b$};
\node at (1,1) {$c$};
\node at (1.5,1) {$d$};
\node at (0,.5) {$e$};
\node at (.5,.5) {$f$};
\node at (1,.5) {$g$};
\node at (1,0) {$h$};
\end{tikzpicture}
\end{center}
All of these polygons are 1-good, 2-good and 3-good, as removing two vertices gives a polygon with six vertices, for which we already proved this. We have to prove that they are 4-good. The polygons on the first two rows all contain something equivalent to the polygon
\begin{center}
\begin{tikzpicture}
\draw (0,.5) -- (0,0) -- (1,0) -- (1,.5) -- (0,.5);
\node at (0,0) {$d$};
\node at (1,0) {$f$};
\node at (.5,0) {$e$};
\node at (0,.5) {$a$};
\node at (1,.5) {$c$};
\node at (.5,.5) {$b$};
\end{tikzpicture}
\end{center}
which is 4-good as $a+b+e+f=b+c+d+e$, so on the first and the second row all polygons are good. On the third row from left to right we have: $a+d+e+g=b+c+d+h$, $b+e+f+g=a+e+f+h$ and $a+c+e+g=c+d+e+f$. On the fourth row we have $a+c+e+h=c+d+e+f$, $b+e+f+g=c+d+e+h$ and $a+c+e+h=b+d+f+g$. So they are all good.\\

Now we prove that all polygons with more than eight lattice points are good. We do so by induction. So let $P$ be a polygon with $N=\#(P\cap\ZZ^2)\geq 9$. If $N$ is odd then we simply remove a vertex and by induction the resulting polygon is good so $P$ is also good. So suppose $N\geq10$ is even. Again by removing a vertex and applying the induction hypothesis we conclude that $P$ is $p$-good for all $1\leq p\leq N/2-1$, so we only have to prove that $P$ is $p$-good with $p=N/2$. If $P$ has at most $N/2$ vertices we are done because we can then just take the sum of $p$ lattice points that aren't vertices and this will be in $\bigcap_{v\text{ vertex}}\mathcal{D}_pP_v$. So suppose $P$ has at least $N/2+1$ vertices. Then $P$ must have at least one edge with only two lattice points. Let $E$ be such an edge. Take a unimodular transformation so that $E=[(0,0),(1,0)]$ and $P\subset \RR\times\RR_{\geq 0}$. Then $P$ contains some point of the form $(t,1)$. So $t\in[a,a+1]$ for some integer $a$. Applying a transformation of the form $(x,y)\mapsto(x-ay,y)$ we get $t\in[0,1]$. Now either $P$ contains a lattice point of the form $(b,c)$ with $b\leq 0$, $c\geq 1$ or it contains a lattice point of the form $(b,c)$ with $b,c\geq 1$. In the first case $(0,1)\in P$, as $(0,1)$ would be in the convex hull of $(0,0)$, $(t,1)$ and $(b,c)$. In the second case $(1,1)\in P$ as $(1,1)$ would be in the convex hull of $(1,0)$, $(t,1)$ and $(b,c)$. So there exists at least one integer $b$ such that $(b,1)\in P$. Let $b_0$ be the smallest such integer and $b_1$ the greatest.\\

\noindent\underline{Case 1: $b_0<b_1$}\newline In this case we set
$$Q=\conv(P\cap\ZZ^2\backslash\{(0,0),(1,0),(b_0,1),(b_0+1,1)\}),$$
which is a subset of $P$ with $N-4$ lattice points, so by induction it is $(N-4)/2$-good. Said differently, it is $p-2$-good. Let $u\in\bigcap_{v\text{ vertex}}\mathcal{D}_{p-2}Q_v$. We claim that $u':=u+(b_0+1,1)\in\bigcap_{v\text{ vertex}}\mathcal{D}_pP_v$ so that $P$ is $p$-good and hence good. To prove the claim, let $v$ be any vertex of $P$. If $v=(0,0)$ or $v=(b_0+1,1)$ we can write $u'$ as the sum of $(1,0)$, $(b_0,1)$ and $u$, which in turn is a sum of $p-2$ distinct lattice points of $Q$, so $u'\in\mathcal{D}_pP_v$. If $v=(1,0)$ or $v=(b_0,1)$ then we can write $u'$ as the sum of $(0,0)$, $(b_0+1,1)$ and $u$ which in turn is a sum of $p-2$ distinct lattice points of $Q$, so $u'\in\mathcal{D}_pP_v$. If $v$ is none of the above then $v$ is in fact a vertex of $Q$ and we can write $u'$ as the sum of $(1,0)$, $(b_0,1)$ and $u$, which in turn is a sum of $p-2$ distinct lattice points of $Q_v$, so $u'\in\mathcal{D}_pP_v$. We conclude that $u'\in\bigcap_{v\text{ vertex}}\mathcal{D}_pP_v$, so $P$ is $p$-good and hence good.\\

\noindent\underline{Case 2: $b_0=b_1$}\newline
In this case $P$ has only one lattice point of the form $(b,1)$ with $b$ an integer. Using the transformation $(x,y)\mapsto(x+y-by,y)$ we can suppose that $(1,1)\in P$. Since $(0,1)$ and $(2,1)$ are not in $P$ the point $(1,1)$ can't possibly be a vertex of $P$. It also follows that $P\backslash[(0,0),(1,0)]$ is contained in the region of points $(x,y)$ satisfying $0<x<y+1$, $y>0$. Now if $(1,2)\notin P$ then $P\backslash[(0,0),(1,0)]$ is contained in the region of points $(x,y)$ satisfying $x>y/2$, and likewise if $(2,2)\notin P$ then $P$ is contained in the region of points $(x,y)$ satisfying $x<y/2+1$. These can't both be the case, because then $P$ would be contained in such a narrow region that it would have to be a triangle with vertices $(0,0)$, $(1,0)$, $(c,2c-1)$, for some positive integer $c$, but this is excluded as we are assuming $P$ to have at least $N/2+1\geq 6$ vertices. So either $(1,2)\in P$ or $(2,2)\in P$ or both. Using the transformation $(x,y)\mapsto(1+y-x,y)$ we can assume $(1,2)\in P$.\\

Suppose $(1,2)$ is not a vertex of $P$. Then we define the polygon
$$Q=\conv(P\cap\ZZ^2\backslash\{(0,0),(1,0),(1,1),(1,2)\})$$
which has $N-4$ lattice points. By the induction hypothesis $Q$ is $p-2$-good with $p=N/2$. Let $u\in\bigcap_{v\text{ vertex}}\mathcal{D}_{p-2}Q_v$. We claim that $u':=u+(2,3)\in\bigcap_{v\text{ vertex}}\mathcal{D}_pP_v$ so that $P$ is $p$-good and hence good. Let $v$ be a vertex of $P$. By our assumption and what we have seen before $v$ cannot be $(1,1)$ or $(1,2)$. Now $u'$ can be written as the sum of $(1,1)$, $(1,2)$ and $u$ which in turn is a sum of $p-2$ lattice points of $Q$ distinct from $v$. So $u'\in\bigcap_{v\text{ vertex}}\mathcal{D}_pP_v$, so $P$ is $p$-good and hence good.\\

So we can assume that $(1,2)$ is a vertex of $P$. If $(2,2)$ happens to be a point of $P$ but not a vertex we can apply the same reasoning to conclude that $P$ is good, so we can also assume that $(2,2)$ is either not in $P$, or it is a vertex of $P$. However both cases will lead to contradiction. In the first case $P$ will be a quadrangle with vertices $(0,0)$, $(0,1)$, $(1,2)$ and $(c,2c-1)$ and in the second case it will be a pentagon with vertices $(0,0)$, $(0,1)$, $(1,2)$, $(2,2)$ and $(c,2c-1)$ (for some positive integer $c$). Both of these contradict our assumption that $P$ has at least $N/2+1\geq 6$ vertices.
\end{proof}
\begin{proof}[Proof of theorem \ref{wedgeconvex}]
We prove this by induction on $N$, the number lattice points. Note that the cases $p=0$ and $p=1$ are always trivial, and that we can always reduce to the case $p\leq N/2$ using lemma \ref{wedgeminus}. We begin with the case where $N\leq 4$ but $P$ is not the exception with four lattice points. Then $P$ is either a single point, a line with two lattice points, a line with three lattice points, a line with four lattice points, something equivalent to the triangle with vertices $(0,0)$, $(1,0)$, $(0,1)$, or the triangle with lattice points $(1,1)$, $(0,0)$, $(1,0)$, $(2,0)$, or something equivalent to the square $[0,1]\times[0,1]$. In case $P$ is a line with $N$ lattice points $\mathcal{D}_pP$ is a line with $\binom{N}{p}$ lattice points. If $P$ is the square then $\mathcal{D}_2P$ is $\{(1,0),(0,1),(1,1),(1,2),(2,1)\}$, which is `convex'. If $P$ is the triangle with four vertices then $\mathcal{D}_2P$ is $\{(1,0),(1,1),(2,0),(2,1),(3,0),(3,1)\}$. These are all `convex'. Everything else reduces to the case $p=1$ or $p=0$ using lemma \ref{wedgeminus}. This deals with the case $N\leq 4$. We henceforth assume $N\geq5$.\\

The whole idea of the proof is that given a polygon $P$ with $N\geq 5$ and a positive integer $p\leq N/2$ if the theorem is true for all the $\mathcal{D}_p P_v$ then the theorem is true for $\mathcal{D}_pP$. This is because by lemma \ref{good} the condition of lemma \ref{intersection} is satisfied which in turn means the condition of lemma \ref{union} is satisfied. One can always reduce to the case $p\leq N/2$ using lemma \ref{wedgeminus}. Because of this reasoning we already know by induction that the theorem is true for polygons that don't contain any of the exceptional polygons.\\

We now prove that even for the exceptions $\mathcal{D}_pP$ will be `convex' if $p\notin\{2,N-2\}$ and $N\geq 6$. As always we assume $p\leq N/2$ and $p\notin\{0,1\}$. So $p\geq 3$. We prove it by induction starting with the case $N=6$,
\begin{center}
\begin{tikzpicture}
\draw [<->] (2,0) -- (0,0) -- (0,1);
\draw (0,0) -- (-.8,0);
\draw (0,0) -- (0,-.8);
\draw (0,.5) -- (1.5,0) -- (-.5,-.5) -- (0,.5);
\fill (0,0) circle (1.5 pt);
\fill (0,.5) circle (1.5 pt);
\fill (1,0) circle (1.5 pt);
\fill (1.5,0) circle (1.5 pt);
\fill (-.5,-.5) circle (1.5 pt);
\fill (.5,0) circle (1.5 pt);
\end{tikzpicture}
\end{center}
The only case to look at is $p=3$. Taking $\mathcal{D}_3$ of this polygon yields the set
\begin{center}
\begin{tikzpicture}
\draw [<->] (3.5,0) -- (0,0) -- (0,1);
\draw (0,0) -- (-.8,0);
\draw (0,0) -- (0,-.8);
\fill (.5,.5) circle (1.5 pt);
\fill (1,.5) circle (1.5 pt);
\fill (1.5,.5) circle (1.5 pt);
\fill (2,.5) circle (1.5 pt);
\fill (2.5,.5) circle (1.5 pt);
\fill (-.5,0) circle (1.5 pt);
\fill (0,0) circle (1.5 pt);
\fill (.5,0) circle (1.5 pt);
\fill (1,0) circle (1.5 pt);
\fill (1.5,0) circle (1.5 pt);
\fill (2,0) circle (1.5 pt);
\fill (2.5,0) circle (1.5 pt);
\fill (3,0) circle (1.5 pt);
\fill (0,-.5) circle (1.5 pt);
\fill (.5,-.5) circle (1.5 pt);
\fill (1,-.5) circle (1.5 pt);
\fill (1.5,-.5) circle (1.5 pt);
\fill (2,-.5) circle (1.5 pt);
\end{tikzpicture},
\end{center}
which is `convex'. Now suppose $N$ is at least seven. It is enough to prove it for $3\leq p\leq N/2$. But upon removing any vertex from $\conv\{(-1,-1),(0,1),(N-3,0)\}$ we either get something that does not contain any exceptional polygon, for which we already proved the theorem, or we get the exceptional polygon with $N-1$ lattice points, for which we know that the $\mathcal{D}_p$ is `convex' by induction.\\

Now we prove the theorem for the remaining polygons. So $N\geq 5$ and $P$ is not equivalent to one of the exceptions. Again we work with induction and we assume $p\leq N/2$, so that it is enough if $\mathcal{D}_pP_v$ is `convex' for every vertex $v$ of $P$. This will be the case by the induction hypothesis, except when $P_v$ is one of the exceptions. Suppose $P_v$ is one of the exceptions and assume at first that $N\geq 7$. If $P_v$ has at least six lattice points then by the above argument $\mathcal{D}_pP_v$ is convex if $p$ is not 2 or $N-3$. The inequalities $p\leq N/2$ and $N\geq 7$ ensure that $p\neq N-3$.
So if $N\geq 7$ the only thing left to check is that $\mathcal{D}_2P$ is `convex'. So $P_v=\conv\{(-1,-1),(0,1),(N-4,0)\}$ and $v$ can only be $(-1,0)$, $(0,-1)$ or $(1,1)$. Now any lattice point of $\conv\mathcal{D}_2P$ belongs to $2P$ and can hence be written as a sum of two lattice points of $P$. The only thing that can go wrong is if those two lattice points of $P$ are equal. So we have to show that if $m\in P\cap\ZZ^2$ and $2m\in\conv\mathcal{D}_2P$ then $2m$ is the sum of two distinct points in $P\cap\ZZ^2$. Because $2m\in\conv\mathcal{D}_2P$, $m$ cannot be a vertex of $P$. Therefore $m$ is of the form $(a,0)$ with $0\leq a\leq N-5$, and one easily checks that regardless of which element of $\{(-1,0),(0,-1),(1,1)\}$ $v$ is equal to, we can always write $2m$ as the sum of two distinct elements of $P\cap\ZZ^2$.\\

All that remains to be done is to check the theorem when $N\leq 6$ in the case where for some vertex $v$ of $P$ we have that $P_v$ is one of the exceptions. Up to equivalence the following are the only possibilities for $P$:
\begin{center}
\begin{tikzpicture}
\draw [<->] (1,0) -- (0,0) -- (0,1);
\draw (0,0) -- (-.8,0);
\draw (0,0) -- (0,-.8);
\draw (0,.5) -- (.5,0) -- (0,-.5) -- (-.5,-.5) -- (0,.5);
\fill (0,0) circle (1.5 pt);
\fill (0,.5) circle (1.5 pt);
\fill (0,-.5) circle (1.5 pt);
\fill (-.5,-.5) circle (1.5 pt);
\fill (.5,0) circle (1.5 pt);
\end{tikzpicture}
\quad\quad\quad
\begin{tikzpicture}
\draw [<->] (1.5,0) -- (0,0) -- (0,1);
\draw (0,0) -- (-.8,0);
\draw (0,0) -- (0,-.8);
\draw (0,.5) -- (1,0) -- (0,-.5) -- (-.5,-.5) -- (0,.5);
\fill (0,0) circle (1.5 pt);
\fill (0,.5) circle (1.5 pt);
\fill (1,0) circle (1.5 pt);
\fill (0,-.5) circle (1.5 pt);
\fill (-.5,-.5) circle (1.5 pt);
\fill (.5,0) circle (1.5 pt);
\end{tikzpicture}
\quad\quad\quad
\begin{tikzpicture}
\draw [<->] (1.5,0) -- (0,0) -- (0,1);
\draw (0,0) -- (-.8,0);
\draw (0,0) -- (0,-.8);
\draw (0,.5) -- (1,0) -- (-.5,-.5) -- (-.5,0) -- (0,.5);
\fill (0,0) circle (1.5 pt);
\fill (0,.5) circle (1.5 pt);
\fill (1,0) circle (1.5 pt);
\fill (-.5,0) circle (1.5 pt);
\fill (-.5,-.5) circle (1.5 pt);
\fill (.5,0) circle (1.5 pt);
\end{tikzpicture}
\end{center}
The case $p=2$ follows by the exact same argument as above. For the first one this is all we have to check as $N=5$ and $p\leq N/2$. For the other two we also have to check that $\mathcal{D}_3P$ is `convex'. Calculating this for these two polygons we get the following two sets,
\begin{center}
\begin{tikzpicture}
\draw [<->] (2,0) -- (0,0) -- (0,1);
\draw (0,0) -- (-1,0);
\draw (0,0) -- (0,-1.5);
\fill (.5,.5) circle (1.5 pt);
\fill (1,.5) circle (1.5 pt);
\fill (1.5,.5) circle (1.5 pt);
\fill (-.5,0) circle (1.5 pt);
\fill (0,0) circle (1.5 pt);
\fill (.5,0) circle (1.5 pt);
\fill (1,0) circle (1.5 pt);
\fill (1.5,0) circle (1.5 pt);
\fill (0,-.5) circle (1.5 pt);
\fill (.5,-.5) circle (1.5 pt);
\fill (1.5,-.5) circle (1.5 pt);
\fill (1,-.5) circle (1.5 pt);
\fill (-.5,-.5) circle (1.5 pt);
\fill (-.5,-1) circle (1.5 pt);
\fill (0,-1) circle (1.5 pt);
\fill (.5,-1) circle (1.5 pt);
\end{tikzpicture}
\quad\quad\quad
\begin{tikzpicture}
\draw [<->] (2,0) -- (0,0) -- (0,1);
\draw (0,0) -- (-1.5,0);
\draw (0,0) -- (0,-1);
\fill (.5,.5) circle (1.5 pt);
\fill (1,.5) circle (1.5 pt);
\fill (1.5,.5) circle (1.5 pt);
\fill (0,.5) circle (1.5 pt);
\fill (-.5,.5) circle (1.5 pt);
\fill (-.5,0) circle (1.5 pt);
\fill (0,0) circle (1.5 pt);
\fill (.5,0) circle (1.5 pt);
\fill (1,0) circle (1.5 pt);
\fill (1.5,0) circle (1.5 pt);
\fill (-1,0) circle (1.5 pt);
\fill (0,-.5) circle (1.5 pt);
\fill (.5,-.5) circle (1.5 pt);
\fill (1,-.5) circle (1.5 pt);
\fill (-.5,-.5) circle (1.5 pt);
\fill (-1,-.5) circle (1.5 pt);
\end{tikzpicture}
\end{center}
and these are indeed `convex'. This concludes the proof.
\end{proof}
\section{The corner cut polyhedron}\label{sec:cornercut}
For $d\geq 0$ the 2D corner cut polyhedron is defined as
$$P_2^d=\conv\mathcal{D}_d\NN^2.$$
(By $\NN$ we mean $\ZZ_{\geq 0}$.) We will now prove corollary \ref{cornercut} which says that every lattice point of $P_2^d$ is a sum of $d$ distinct lattice points in $\NN^2$.
\begin{proof}[Proof of corollary \ref{cornercut}]
	Let $m$ be a lattice point in $P_2^d$. We prove that it is in $\mathcal{D}_d\NN^2$.
	Let $m_1,\ldots,m_k$ be elements of $\mathcal{D}_d\NN^2$ such that $m\in\conv\{m_1,\ldots,m_k\}$. Let $S$ be any finite subset of $\NN^2$ such that each $m_i$ is a sum of $d$ distinct elements of $S$. Let $P$ be any (bounded) convex lattice polygon contained in $\RR_{\geq 0}^2$ that contains $S$. We then have that $m_1,\ldots,m_k\in\mathcal{D}_d(P\cap\ZZ^2)$, and hence $m\in\conv\mathcal{D}_d(P\cap\ZZ^2)$. We have to take $P$ so that it isn't equivalent to any of the exceptions in theorem \ref{wedgeconvex}. For this it is enough if $P$ contains the points $(0,0)$, $(0,1)$, $(1,0)$ and $(1,1)$. By the theorem we then have $m\in\mathcal{D}_d(P\cap\ZZ^2)$ and hence $m\in\mathcal{D}_d\NN^2$.
\end{proof}
\section{A 3D counterexample}\label{3D}
%As in the previous section we will write $\mathcal{D}_pP$ in stead of $\mathcal{D}_p(P\cap\ZZ^n)$.
It turns out that in three dimensions convexity fails even for
$$\mathcal{D}_{42}(\ZZ^3\cap\conv((0,0,0),(0,0,6),(0,6,0),(6,0,0))).$$
To see this consider the following diagram of the lattice points in
$$P:=\conv((0,0,0),(0,0,6),(0,6,0),(6,0,0)):$$
%$$
%\begin{tikzpicture}
%\fill (0,0) circle (1 pt);
%\fill (.5,0) circle (1 pt);
%\fill (0,.5) circle (1 pt);
%\fill (1,0) circle (1 pt);
%\fill (.5,.5) circle (1 pt);
%\fill (0,1) circle (1 pt);
%\fill (1.5,0) circle (1 pt);
%\fill (1,.5) circle (1 pt);
%\fill (.5,1) circle (1 pt);
%\fill (0,1.5) circle (1 pt);
%\fill (2,0) circle (1 pt);
%\fill (1.5,.5) circle (1 pt);
%\fill (1,1) circle (1 pt);
%\fill (.5,1.5) circle (1 pt);
%\fill (0,2) circle (1 pt);
%\fill (2.5,0) circle (1 pt);
%\fill (2,.5) circle (1 pt);
%\fill (1.5,1) circle (1 pt);
%\fill (1,1.5) circle (1 pt);
%\fill (.5,2) circle (1 pt);
%\fill (0,2.5) circle (1 pt);
%\fill (3,0) circle (1 pt);
%\fill (2.5,.5) circle (1 pt);
%\fill (2,1) circle (1 pt);
%\fill (1.5,1.5) circle (1 pt);
%\fill (1,2) circle (1 pt);
%\fill (.5,2.5) circle (1 pt);
%\fill (0,3) circle (1 pt);
%\end{tikzpicture}
%$$
\begin{center}
\begin{tikzpicture}
\fill[blue] (0,0) circle (1 pt);
\fill[blue] (.3,0) circle (1 pt);
\fill[blue] (0,.3) circle (1 pt);
\fill[blue] (.6,0) circle (1 pt);
\fill[blue] (.3,.3) circle (1 pt);
\fill[blue] (0,.6) circle (1 pt);
\fill[blue] (.9,0) circle (1 pt);
\fill[blue] (.6,.3) circle (1 pt);
\fill[blue] (.3,.6) circle (1 pt);
\fill[blue] (0,.9) circle (1 pt);
\fill[blue] (1.2,0) circle (1 pt);
\fill[blue] (.9,.3) circle (1 pt);
\fill[blue] (.6,.6) circle (1 pt);
\fill[blue] (.3,.9) circle (1 pt);
\fill[blue] (0,1.2) circle (1 pt);
\fill[red] (1.5,0) circle (1 pt);
\fill[blue] (1.2,.3) circle (1 pt);
\fill[blue] (.9,.6) circle (1 pt);
\fill[blue] (.6,.9) circle (1 pt);
\fill[blue] (.3,1.2) circle (1 pt);
\fill[blue] (0,1.5) circle (1 pt);
\fill[olive] (1.8,0) circle (1 pt);
\fill[olive] (1.5,.3) circle (1 pt);
\fill[olive] (1.2,.6) circle (1 pt);
\fill[olive] (.9,.9) circle (1 pt);
\fill[olive] (.6,1.2) circle (1 pt);
\fill[red] (.3,1.5) circle (1 pt);
\fill[blue] (0,1.8) circle (1 pt);
\end{tikzpicture}
\hspace{5 pt}
\begin{tikzpicture}
\fill[blue] (0,0) circle (1 pt);
\fill[blue] (.3,0) circle (1 pt);
\fill[blue] (0,.3) circle (1 pt);
\fill[blue] (.6,0) circle (1 pt);
\fill[blue] (.3,.3) circle (1 pt);
\fill[blue] (0,.6) circle (1 pt);
\fill[blue] (.9,0) circle (1 pt);
\fill[blue] (.6,.3) circle (1 pt);
\fill[blue] (.3,.6) circle (1 pt);
\fill[blue] (0,.9) circle (1 pt);
\fill[olive] (1.2,0) circle (1 pt);
\fill[olive] (.9,.3) circle (1 pt);
\fill[red] (.6,.6) circle (1 pt);
\fill[blue] (.3,.9) circle (1 pt);
\fill[blue] (0,1.2) circle (1 pt);
\fill[olive] (1.5,0) circle (1 pt);
\fill[olive] (1.2,.3) circle (1 pt);
\fill[olive] (.9,.6) circle (1 pt);
\fill[olive] (.6,.9) circle (1 pt);
\fill[olive] (.3,1.2) circle (1 pt);
\fill[olive] (0,1.5) circle (1 pt);
\end{tikzpicture}
\hspace{5 pt}
\begin{tikzpicture}
\fill[blue] (0,0) circle (1 pt);
\fill[blue] (.3,0) circle (1 pt);
\fill[blue] (0,.3) circle (1 pt);
\fill[blue] (.6,0) circle (1 pt);
\fill[blue] (.3,.3) circle (1 pt);
\fill[blue] (0,.6) circle (1 pt);
\fill[olive] (.9,0) circle (1 pt);
\fill[olive] (.6,.3) circle (1 pt);
\fill[olive] (.3,.6) circle (1 pt);
\fill[olive] (0,.9) circle (1 pt);
\fill[olive] (1.2,0) circle (1 pt);
\fill[olive] (.9,.3) circle (1 pt);
\fill[olive] (.6,.6) circle (1 pt);
\fill[olive] (.3,.9) circle (1 pt);
\fill[olive] (0,1.2) circle (1 pt);
\end{tikzpicture}
\hspace{5 pt}
\begin{tikzpicture}
\fill[blue] (0,0) circle (1 pt);
\fill[olive] (.3,0) circle (1 pt);
\fill[red] (0,.3) circle (1 pt);
\fill[olive] (.6,0) circle (1 pt);
\fill[olive] (.3,.3) circle (1 pt);
\fill[olive] (0,.6) circle (1 pt);
\fill[olive] (.9,0) circle (1 pt);
\fill[olive] (.6,.3) circle (1 pt);
\fill[olive] (.3,.6) circle (1 pt);
\fill[olive] (0,.9) circle (1 pt);
\end{tikzpicture}
\hspace{5 pt}
\begin{tikzpicture}
\fill[olive] (0,0) circle (1 pt);
\fill[olive] (.3,0) circle (1 pt);
\fill[olive] (0,.3) circle (1 pt);
\fill[olive] (.6,0) circle (1 pt);
\fill[olive] (.3,.3) circle (1 pt);
\fill[olive] (0,.6) circle (1 pt);
\end{tikzpicture}
\hspace{5 pt}
\begin{tikzpicture}
\fill[olive] (0,0) circle (1 pt);
\fill[olive] (.3,0) circle (1 pt);
\fill[olive] (0,.3) circle (1 pt);
\end{tikzpicture}
\hspace{5 pt}
\begin{tikzpicture}
\fill[olive] (0,0) circle (1 pt);
\end{tikzpicture}
\end{center}
The first triangle consists of the points with third coordinate equal to zero, the second triangle consists of the points with third coordinate equal to one, etcetera.
Of the 84 points 40 are blue, 40 are olive green and 4 are red. In fact the four red points span a plane that separates the blue points from the olive green points. The four red points have coordinates $P_1:=(5,0,0)$, $P_2:=(1,5,0)$, $P_3:=(2,2,1)$, $P_4:=(0,1,3)$ respectively. One can see that $P_1+P_2+P_4=3P_3$, so they indeed span a plane. In fact the simplest counterexample to convexity of the distinct sum set in 2 dimensions also has four lattice points and that configuration is equivalent to the one of the four red points in this setting. Therefore the set $\mathcal{D}_2\{P_1,P_2,P_3,P_4\}$ is not the set of lattice points of a convex set. Now the sum of the blue points plus $\mathcal{D}_2\{P_1,P_2,P_3,P_4\}$ will span a (2D) facet of $\conv\mathcal{D}_{42}P$, and the intersection of this facet with $\mathcal{D}_{42}P$ is exactly the sum of the blue points plus $\mathcal{D}_2\{P_1,P_2,P_3,P_4\}$. Since $\mathcal{D}_2\{P_1,P_2,P_3,P_4\}$ is not `convex', neither is $\mathcal{D}_{42}P$. To be specific, the sum of all the blue points plus $2P_3$ is in $\conv\mathcal{D}_{42}P$ but not in $\mathcal{D}_{42}P$, as $2P_3$ can not be written as the sum of two distinct red points. Note that when replacing $P$ with $\mathbb{N}^3$ the same counterexample still works.

\vspace{10 pt}
\large
Alexander Lemmens\newline
COSIC, Kasteelpark Arenberg 10,\newline
B-3001 Heverlee-Leuven,\newline
Belgium
\end{document}